\title{\textbf{Size reconstructibility of graphs}}
\author{Carla Groenland\thanks{Mathematical Institute, University of Oxford, Oxford OX2 6GG, United Kingdom. \texttt{\{groenland,guggiari,scott\}@maths.ox.ac.uk}}, Hannah Guggiari\footnotemark[1], Alex Scott\footnotemark[1] \thanks{Supported by a Leverhulme Trust Research Fellowship.}\\
}
\date{\today}
\newtheorem{theorem}{Theorem}[section]
\newtheorem{conjecture}[theorem]{Conjecture}
\newtheorem{lemma}[theorem]{Lemma}
\newtheorem{observation}[theorem]{Observation}
\newtheorem{corollary}[theorem]{Corollary}
\newtheorem{problem}[theorem]{Problem}
\newcommand{\missing}{\frac1{20}}
\newcommand{\adversary}{\frac1{40}}
\newtheorem{claim}{Claim}
\newenvironment{claimproof}[1][\proofname]{\par\pushQED{\hfill$\lozenge$}\normalfont \topsep6\p@\@plus6\p@\relax \trivlist \item\relax{\itshape#1\@addpunct{.}}\hspace\labelsep\ignorespaces}{\popQED\endtrivlist\@endpefalse}
\begin{document}
\newpage
\maketitle
\begin{abstract}
\noindent
The deck of a graph $G$ is given by the multiset of (unlabelled) subgraphs $\{G-v:v\in V(G)\}$. The subgraphs $G-v$ are referred to as the cards of $G$. Brown and Fenner recently showed that, for $n\geq29$, the number of edges of a graph $G$ can be computed from any deck missing 2 cards. We show that, for sufficiently large $n$, the number of edges can be computed from any deck missing at most $\missing\sqrt{n}$ cards.
\end{abstract}

\section{Introduction}
\noindent
Throughout this paper, all graphs are finite and undirected with no loops or multiple edges. The \textit{order} of a graph is the number of vertices in the graph; the \textit{size} of a graph refers to the number of edges.

Given a graph $G$ and any vertex $v\in V(G)$, the \textit{card} $G-v$ is the subgraph of $G$ obtained by removing the vertex $v$ and all edges incident to $v$. The multiset $\mathcal{D}(G)$ of all unlabelled cards of $G$ is called the \textit{deck} and has size $n$.

It is natural to ask whether it is possible for two non-isomorphic graphs to have the same deck. Kelly and Ulam \cite{K42,K57,U60} proposed the following \textit{Reconstruction Conjecture}.
\begin{conjecture}
For $n>2$, two graphs $G$ and $H$ of order $n$ are isomorphic if and only if $\mathcal{D}(G)=\mathcal{D}(H)$.
\end{conjecture}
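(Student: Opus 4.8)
The statement is an ``if and only if,'' and the two directions are of wildly different difficulty, so the first thing I would do is separate them. The ``only if'' direction is immediate: if $\phi\colon V(G)\to V(H)$ is an isomorphism, then for every $v\in V(G)$ the restriction of $\phi$ witnesses $G-v\cong H-\phi(v)$, and since $\phi$ is a bijection the assignment $v\mapsto\phi(v)$ pairs up the cards of $G$ with those of $H$ so that paired cards are isomorphic, giving $\mathcal{D}(G)=\mathcal{D}(H)$ as multisets of unlabelled graphs. I would dispose of this in one line.

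The ``if'' direction is the real content, and here I must be candid: this is the Reconstruction Conjecture itself, which remains open for general graphs, so I would not expect to produce a complete proof. The realistic plan would be to attack it through \emph{reconstructible invariants}. The central tool is Kelly's counting lemma: for any graph $F$ with $|V(F)|<n$, the number of subgraphs of $G$ isomorphic to $F$ can be computed from $\mathcal{D}(G)$, because each such subgraph survives in exactly $n-|V(F)|$ of the cards, so summing the count of $F$-subgraphs over the whole deck and dividing by $n-|V(F)|$ recovers the total. First I would use this to show that a vast amount of combinatorial data---the degree sequence, the number of edges, the number of triangles, and more generally the number of copies of every proper subgraph---is determined by the deck, which forces $G$ and $H$ to agree on all of it.

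The hard part, and precisely the reason the conjecture is open, is bridging the gap between ``$G$ and $H$ share every small-subgraph count'' and ``$G\cong H$'': the deck records only vertex-deleted, essentially local, information, and there is no known mechanism that promotes this local agreement to a global isomorphism in full generality. The route that the literature actually takes, and the one I would pursue, is to restrict to structured families---trees, disconnected graphs, regular graphs---where the extra rigidity lets the reconstructible counts pin down the isomorphism type. In each such case I would expect the crux to be an inductive or gluing argument showing that the pieces identified by Kelly's lemma can be reassembled in only one way, and I would not anticipate these methods extending to arbitrary $G$; that gap is exactly where the difficulty lies.
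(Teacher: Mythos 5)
This statement is the Kelly--Ulam Reconstruction Conjecture, and the paper does not prove it: it is stated as a conjecture and explicitly described as still open. So there is no ``paper's own proof'' to compare against, and your candour in refusing to manufacture one is the correct response. Your one-line argument for the ``only if'' direction is right: an isomorphism $\phi$ induces $G-v\cong H-\phi(v)$ for every $v$, and bijectivity of $\phi$ gives equality of the decks as multisets. Your description of the state of the ``if'' direction is also accurate and matches the paper's framing --- Kelly's lemma recovers all proper-subgraph counts (and hence the size, degree sequence, connectedness, etc.) from the full deck, the conjecture is known for structured families such as trees and for almost all graphs, but no mechanism is known for promoting agreement of vertex-deleted subgraphs to a global isomorphism in general.

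The only substantive point to flag is that what you have written is not, and cannot currently be, a proof of the stated biconditional: the forward implication is settled in your first paragraph, but the reverse implication remains open, and your proposal for it is a survey of strategy rather than an argument. That is a genuine gap in the sense that the statement is unproved, but it is the gap the entire field shares, not one introduced by your approach. Had the task been to prove one of the paper's actual theorems (such as Theorem 1.2 on reconstructing the number of edges from $n-k$ cards), a complete argument would be expected; for this conjecture, identifying the trivial direction, the key reconstructible invariants, and the precise location of the difficulty is the most that an honest attempt can deliver.
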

\noindent
The Reconstruction Conjecture remains open, although it is known to be true for a few classes of graphs (for example, trees \cite{K57}). Moreover, almost every graph can be reconstructed \cite{B90,M76,M88}. For more background, see \cite{AFLM10, B91, BH77, LS16, N78}.

A more general problem is to determine which parameters of a graph can be calculated from its deck. Such parameters are said to be \textit{reconstructible}. Given a full deck of cards, it is easy to reconstruct the number of edges $m$: summing over the edges present in all of the cards gives $m(n-2)$, where $n$ is the number of vertices. It is also well known that connectedness and the degree sequence are reconstructible.

Some parameters are reconstructible even if there is not a full deck of cards. For example, Bowler, Brown, Fenner and Myrvold \cite{BBFM11} showed that any $\left\lfloor\frac n2\right\rfloor+2$ cards suffice to determine whether the graph is connected. Myrvold \cite{M92} also found that the degree sequence is reconstructible from any $n-1$ cards.

In this paper, we are concerned with reconstructing the number of edges. Myrvold's result \cite{M92} on the degree sequence immediately implies that the size is reconstructible from any $n-1$ cards. In a recent paper, Brown and Fenner \cite{BF17} showed that, for $n\geq29$, the size of a graph can be reconstructed from any $n-2$ cards. 

Woodall \cite{W15} found that, for any $p\geq3$ and $n$ sufficiently large, if two graphs on $n$ vertices have $n-p$ common cards, then the number of edges in these two graphs differs by at most $p-2$.

In Section \ref{sec:main}, we will improve on both results by showing that the size of a graph is reconstructible with up to $\missing\sqrt{n}$ missing cards. In particular, we will prove the following theorem.

\begin{restatable}{theorem}{sizereconstructability}
\label{thm:sizereconstructibility}
For $n$ sufficiently large and $k\leq \missing\sqrt{n}$, the number of edges $m$ of a graph $G$ on $n$ vertices is reconstructible from any $n-k$ cards.
\end{restatable}

We will also consider the following adversarial version of the problem. An adversary chooses a graph $G$ of order $n$ and gives us a collection of $n$ cards, each showing a graph on $n-1$ vertices. We are told that there are $n-k$ \textit{true cards}, which come from the deck $\mathcal{D}(G)$. The other $k$ cards are \textit{false cards}, which can depict any graph of order $n-1$. For which $k$ can we reconstruct the size of $G$, regardless of the graph $G$ and the cards given by the adversary? Theorem \ref{thm:sizereconstructibility} immediately implies the following.

\begin{corollary}
\label{cor:adversary}
Let $n$ be sufficiently large and $k\leq \adversary\sqrt{n}$. The number of edges $m$ of a graph $G$ on $n$ vertices is reconstructible from any collection  $\mathcal{C}$ of cards where $n-k$ are true and $k$ are false.
\end{corollary}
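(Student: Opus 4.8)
The plan is to reduce directly to Theorem \ref{thm:sizereconstructibility}, exploiting the factor of two between $\missing$ and $\adversary$. Since being \emph{reconstructible} means that $m$ is determined by the collection $\mathcal C$, it suffices to show that any two graphs $G$ and $G'$ that are \emph{consistent} with $\mathcal C$ — in the sense that at least $n-k$ of the cards of $\mathcal C$ are genuine cards of the graph — must satisfy $m(G)=m(G')$. At least one such graph, the adversary's $G$, exists by hypothesis.

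First I would record the cards of $\mathcal C$ that are genuine cards of $G$ as a sub-multiset $\mathcal T_G$ of $\mathcal C$, and likewise $\mathcal T_{G'}$; each has size at least $n-k$ because at most $k$ cards are false. The key step is a multiset inclusion–exclusion inside the size-$n$ collection $\mathcal C$: since both $\mathcal T_G$ and $\mathcal T_{G'}$ are sub-multisets of $\mathcal C$, their common part satisfies
\[
|\mathcal T_G\cap\mathcal T_{G'}|\ \ge\ |\mathcal T_G|+|\mathcal T_{G'}|-|\mathcal C|\ \ge\ (n-k)+(n-k)-n\ =\ n-2k .
\]
Thus there is a multiset of at least $n-2k$ cards that is simultaneously a sub-deck of $\mathcal D(G)$ and a sub-deck of $\mathcal D(G')$. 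As a partial deck of $G$ it is missing at most $2k\le\missing\sqrt n$ cards (using $k\le\adversary\sqrt n$), and the identical statement holds for $G'$.

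Now I would apply Theorem \ref{thm:sizereconstructibility} with its parameter taken to be $2k$. Regarded as cards of $G$, these $n-2k$ cards determine $m(G)$; regarded as cards of $G'$ they determine $m(G')$. But this is the \emph{same} multiset of unlabelled cards, and the value that Theorem \ref{thm:sizereconstructibility} assigns to a multiset of cards depends only on that multiset, so the two determined values coincide and $m(G)=m(G')$. Hence every graph consistent with $\mathcal C$ has the same size, and $m$ is reconstructible from $\mathcal C$.

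I expect the only real subtlety to lie in the bookkeeping of the second paragraph: one must treat $\mathcal T_G$ and $\mathcal T_{G'}$ as genuine sub-multisets of $\mathcal C$ so that the inclusion–exclusion bound is valid, verify that their common part is indeed a sub-deck of each of $\mathcal D(G)$ and $\mathcal D(G')$, and confirm that $2k$ stays within the tolerance $\missing\sqrt n$ of Theorem \ref{thm:sizereconstructibility}. The conceptual content — that two graphs agreeing with $\mathcal C$ up to $k$ false cards each must share at least $n-2k$ true cards, to which the theorem applies on both sides at once — is short, and everything else is routine.
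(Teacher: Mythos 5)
Your proposal is correct and follows essentially the same route as the paper: both arguments observe that any two graphs consistent with $\mathcal{C}$ must share at least $n-2k$ common cards, and then apply Theorem \ref{thm:sizereconstructibility} with parameter $2k\leq\missing\sqrt{n}$ to conclude the two graphs have the same size. Your version merely spells out the multiset inclusion--exclusion bookkeeping that the paper leaves implicit.
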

\begin{proof}
Suppose that $G$ and $H$ are two graphs on $n$ vertices and each has at least $n-k$ cards in common with a deck of cards $\mathcal{C}$. Then $G$ and $H$ must have at least $n-2k$ cards in common. We may apply Theorem \ref{thm:sizereconstructibility} to these $n-2k$ common cards. If $n$ is sufficiently large and $2k\leq\missing\sqrt{n}$, then $G$ and $H$ must have the same number of edges.
\end{proof}
The rest of the paper is organised as follows. Theorem \ref{thm:sizereconstructibility} is proved in Section \ref{sec:main} and some open problems are given in Section \ref{sec:conclusion}. 

\section{Size reconstruction from $n-k$ cards}
\label{sec:main}
We first give the relevant definitions in Section \ref{subsec:notation} followed by an outline of our proof in Section \ref{subsec:proof_overview}. Some of the auxiliary results are given in Section \ref{subsec:preliminary_results} and the main proof is presented in Section \ref{subsec:main_result}.

\subsection{Notation and definitions}
\label{subsec:notation}
Throughout Section \ref{sec:main}, $G$ is a graph of order $n$ and size $m=e(G)$, where $m$ is unknown. The vertex set of $G$ is $V(G)=\{v_1,\dots,v_n\}$ and we write $G_i$ for the card $G-v_i$. We may assume that we are given the cards $G_1,\dots,G_{n-k}$. In the proof of the main result, we will assume that $k\leq \missing\sqrt{n}$. 

For any graph $H$, let the number of vertices of degree $t$ be 
\begin{equation*}
d_t(H)=|\{v\in V(H):d_{H}(v)=t\}|
\end{equation*}
where $d_{H}(v)$ denotes the degree of $v$ in $H$. For convenience, we write $d_t=d_t(G)$ and $d(v)=d_G(v)$. Note that $d_t$ is unknown for every $t$ and that we know $d_t(G_1),\dots,d_t(G_{n-k})$.

Let $s_t = \sum_{i=1}^n d_t(G_i)$. As we will note below (Lemma \ref{lem:magicformula}), it is easy to see that
\begin{equation}
\label{eq:magicformula_notation}
s_t= \sum_{i=1}^n d_t(G_i) = (n-1-t)d_t + (t+1)d_{t+1}.
\end{equation}

As we progress in the proof, we will use various estimates determined from the cards for quantities of interest. We set
\begin{equation*}
\widetilde{m}=\left\lfloor\frac1{n-2-k}\sum_{i=1}^{n-k}e(G_i)\right\rfloor
\end{equation*}
\noindent as an estimate of the number of edges $m$,
\begin{equation*}
\widetilde{d}_t=|\{i\in\{1,\dots,n-k\}:\widetilde{m} - e(G_i) =t\}|
\end{equation*}
\noindent as an estimate of the number $d_t$ of vertices of degree $t$, and
\begin{equation*}
\widetilde{s}_t=\sum_{i=1}^{n-k} d_t(G_i)
\end{equation*}
as an estimate of $s_t = \sum_{i=1}^n d_t(G_i)$ (thus $s_t$ is the number of degree $t$ vertices in the full deck of cards, while $\widetilde{s_t}$ is the number of degree $t$ vertices on the cards that we are allowed to see).

We use the short-hand $[n]=\{1,\dots,n\}$ and slightly abuse notation by writing $[a,b]=[a,b]\cap \mathbb{Z}$ for the set of integers in the corresponding real interval.
\subsection{Proof overview}
\label{subsec:proof_overview}
We first show that our estimate $\widetilde{m}$ on the number of edges $m$ is an upper bound on $m$ satisfying $0\leq \widetilde{m}-m<2k$. Our goal is then to determine $\alpha=\widetilde{m}-m$ from the cards, since this allows us to compute $m$ from $\widetilde{m}$. 

If we knew the number of edges $m$, then we could calculate the degree of vertex $v_i$ from its card $G_i$ by setting $d(v_i)=m-e(G_i)$. Instead, we estimate the degree of the vertex corresponding to each card by 
\begin{equation*}\widetilde{d}(v_i)=\widetilde{m}-e(G_i)\end{equation*} and count the number of vertices with estimated degree $t$
\begin{equation*}
\widetilde{d}_t=|\{i\in[n-k]:\widetilde{m}-e(G_i)=t\}|.
\end{equation*}
Since $m\leq\widetilde{m}$, our estimate $\widetilde{d}(v_i)$ may be larger than the actual degree of vertex $v_i$. This means that the actual sequence $(d_t)$ has been shifted to the right by $\alpha$. Moreover, $k$ degrees did not get counted due to the missing cards. It is important to notice here that we know the shift is equal to $\alpha$, even when we might not know any of the $d_t$ or $\alpha$ itself. 

We note that $d_t-k\leq \widetilde{d}_{t+\alpha}\leq d_t$. Hence, if we were told that $d_t>k$ and $d_{t+1}=\dots=d_{t+2k}=0$, then we could determine the shift $\alpha$ from $(\widetilde{d}_t)$ (namely, $\alpha$ would be the largest $i\in \{0,\dots,2k\}$ for which $\widetilde{d}_{t+i}>0$). Aiming for a situation like this, we reconstruct $d_t$ exactly from the cards for many values of $t$. If we know $d_{t+1}$, then the formula given in \eqref{eq:magicformula_notation} makes it possible to compute $d_t$ from $s_t$. Unfortunately, we cannot determine $s_t$ exactly but an estimate $\widetilde{s}_t$ suffices in many cases: if we can compute an estimate for the integer $d_t$ with error less than $\frac12$, then we can round away the error. 
This is made precise in Claim 1. 

In Lemma \ref{lem:findingdt}, we show that, for many values of $t$, we can ``guess'' the integers $d_t$ and $d_{t+1}$ from $\widetilde{s}_t$. We require the value $\frac{t+1}n$ to be bounded away from certain fractions (that do not depend on $G$). Moreover, we need $d_t$ and $d_{t+1}$ to be small (to improve the estimate $\widetilde{s}_t$ and to have fewer values to guess between). In order to find a $t$ for which $d_t$ and $d_{t+1}$ are small, we compute yet another estimate $d_t^*$ from the cards in Lemma \ref{lem:dtstar}.

Using our reconstructed values for $d_t$, we reconstruct the shift $\alpha=\widetilde{m}-m$ which allows us to determine $m$.

\subsection{Preliminary results}
\label{subsec:preliminary_results}
As noted above, we set
\begin{equation*}
\widetilde{m}=\left\lfloor\frac1{n-2-k}\sum_{i=1}^{n-k}e(G_i)\right\rfloor.
\end{equation*}
We will use $\widetilde{m}$ as an estimate for the number of edges in $G$. Let 
\begin{equation*}
\alpha = \widetilde{m}-m.
\end{equation*}
We can calculate $\widetilde{m}$ from the cards $G_1,\dots,G_{n-k}$. Thus in order to determine $m$, it is enough to determine the ``shift'' $\alpha$.
\begin{lemma}
\label{lem:edges}
$0\leq \widetilde{m}-m \leq \frac{k(n-1)}{n-2-k}$.
\end{lemma}
\noindent
Note that, if $k=o(n)$, then $\alpha = \widetilde{m}-m \leq(1+o(1))k$.
\begin{proof}[Proof of Lemma \ref{lem:edges}]
Suppose that we have the entire deck of $G$. Every edge of $G$ is on exactly $n-2$ cards and therefore $\sum_{i=1}^n e(G_i)=(n-2)m$. Furthermore, for every $v_i\in V(G)$, we have that $e(G_i)=m-d(v_i)$. It follows that 
\begin{align*}
\sum_{i=1}^{n-k}e(G_i)&=(n-2)m -\sum_{i=n-k+1}^n e(G_i)\\
&=(n-2-k)m+\sum_{j=n-k+1}^n d(v_{j}).
\end{align*}
The claimed bounds follows from the fact that $0\leq d(v)\leq n-1$ for all $v\in V(G)$.
\end{proof}
\noindent
For $t\in \{0,\dots, n-1\}$, recall that $s_t=\sum_{i=1}^n d_t(G_i)$ and
\begin{equation*}
\widetilde{s}_t  = \sum_{i=1}^{n-k} d_t(G_i) = \sum_{i=1}^{n-k} |\{v\in V(G_i):d_{G_i}(v)=t\}|.
\end{equation*}
Note that $\widetilde{s}_t$ can be calculated from the given cards.
\begin{lemma}
\label{lem:magicformula}
We have $d_t(G_i)\leq d_t+d_{t+1}$ and
\begin{equation}
\label{eq:magicformula}
s_t = \sum_{i=1}^{n} d_t(G_i)=(n-1-t)d_t+(t+1)d_{t+1}.
\end{equation}
In particular, $0\leq  s_t-\widetilde{s}_t \leq k(d_t+d_{t+1})$.
\end{lemma}
\begin{proof}
A vertex of degree $t$ on a card $G_i$ can either have degree $t$ in the graph $G$ or degree $t+1$ (in the case where it is a neighbour of $v_i$). This shows that $d_t(G_i)\leq d_t+d_{t+1}$ for all $i$. 

A vertex of degree $t+1$ gets counted exactly once in $\sum_{i=1}^nd_t(G_i)$ for each of its neighbours; a vertex of degree $t$ gets counted on all cards except for its own and those of its neighbours. This proves \eqref{eq:magicformula}. The last claim follows by combining the fact that $s_t-\widetilde{s}_t= \sum_{j=n-k+1}^nd_t(G_{j})$ with the first claim.
\end{proof}
\noindent
As noted by Brown and Fenner \cite{BF17} and others, any result for a graph $G$ implies a corresponding result for its complement $\overline{G}$.
\begin{observation}
\label{obs:complementarity}
If $\mathcal{D}(G)=\{G_1,\dots,G_n\}$, then $\mathcal{D}(\overline{G})=\{\overline{G}_1,\dots,\overline{G}_n\}$. Moreover, we have that $d_t(\overline{G})=d_{n-1-t}(G)$ for any $t\in\{0,\dots,n-1\}$.
\end{observation}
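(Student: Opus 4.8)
The plan is to verify the two assertions separately, each by a direct check of adjacencies and degrees. The crux of the first assertion is the identity $\overline{G_i}=\overline{G}-v_i$ for every $i$, that is, that deleting a vertex commutes with taking the complement; granting this, the deck $\mathcal{D}(\overline{G})$ is obtained simply by complementing each card of $G$ one at a time.

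To prove the commuting identity, first I would note that $\overline{G_i}$ and $\overline{G}-v_i$ are graphs on the same vertex set $V(G)\setminus\{v_i\}$, so it suffices to compare their edge sets. Fix distinct $u,w\in V(G)\setminus\{v_i\}$. In $G_i=G-v_i$ the pair $uw$ is an edge if and only if $uw\in E(G)$, since deleting $v_i$ leaves adjacencies among the remaining vertices untouched; hence $uw$ is an edge of $\overline{G_i}$ exactly when $uw\notin E(G)$. On the other hand $uw\in E(\overline{G})$ exactly when $uw\notin E(G)$, and as neither endpoint equals $v_i$ this condition is preserved upon deleting $v_i$, so $uw$ is an edge of $\overline{G}-v_i$ under the same condition. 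The two graphs therefore have identical edge sets, giving $\overline{G_i}=\overline{G}-v_i$ and hence $\mathcal{D}(\overline{G})=\{\overline{G_1},\dots,\overline{G_n}\}$.

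For the degree statement, each $v\in V(G)$ satisfies $d_{\overline{G}}(v)=(n-1)-d_G(v)$, because $v$ is joined in $\overline{G}$ to precisely those $n-1$ remaining vertices to which it is not joined in $G$. Thus $v$ has degree $t$ in $\overline{G}$ if and only if it has degree $n-1-t$ in $G$, and counting such vertices gives $d_t(\overline{G})=d_{n-1-t}(G)$ for every $t\in\{0,\dots,n-1\}$.

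Since the statement is elementary, I do not expect a genuine obstacle; the only point deserving care is the commutation of complementation with vertex deletion, which is precisely where the substance of the observation lies and which underpins the correspondence between results for $G$ and for $\overline{G}$ that is exploited later.
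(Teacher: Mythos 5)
Your proof is correct and is exactly the standard verification; the paper states this as an observation without proof, and your argument (complementation commutes with vertex deletion, checked edge by edge, plus $d_{\overline{G}}(v)=n-1-d_G(v)$) fills in precisely the details the authors took as read. No issues.
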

\noindent
The result below will be used to find values of $t$ for which $d_t$ is guaranteed to be small. 
\begin{lemma}
\label{lem:dtstar}
Suppose that $k\leq\frac n3$. For each $t\in\{0,\dots,n-1\}$ we can calculate a value $d_t^*$ from the cards that satisfies $\frac14d_t-1\leq d_t^*\leq d_{t-1}+d_t+d_{t+1}$.
\end{lemma}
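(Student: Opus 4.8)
The plan is to read off $d_t^*$ from the computable counts $s_t$ of Lemma \ref{lem:stsize} and to obtain both inequalities from a two-sided estimate of $s_t$ in terms of $d_t$. Concretely, for $t$ in the lower half I would set $d_t^*=\bigl\lfloor s_t/\max(t+1,\,n-1-t)\bigr\rfloor$ (which equals $\lfloor s_t/(n-1-t)\rfloor$ whenever $t\le (n-2)/2$); this is computable since $s_t$ is. Everything then reduces to sandwiching $s_t$ between a multiple of $d_t$ above and below.

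The upper bound is immediate from Lemma \ref{lem:stsize}: the identity $s_t=(n-1-t)d_t+(t+1)d_{t+1}-\varepsilon_t$ with $\varepsilon_t\ge 0$ gives $s_t\le (n-1-t)d_t+(t+1)d_{t+1}$. Dividing by $n-1-t$ and using $t+1\le n-1-t$ (valid exactly for $t\le (n-2)/2$) yields $d_t^*\le s_t/(n-1-t)\le d_t+d_{t+1}\le d_{t-1}+d_t+d_{t+1}$.

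The main obstacle is the lower bound. The naive route, feeding $\varepsilon_t\le k(d_t+d_{t+1})$ from Lemma \ref{lem:stsize} back into the identity, breaks down for small $t$: the term $k\,d_{t+1}$ can be as large as the entire signal $(n-1-t)d_t$ when $d_{t+1}$ is large, so $s_t/(n-1-t)$ retains no usable information about $d_t$. I would instead prove a direct counting bound that discards the $d_{t+1}$ contribution entirely. Fix a vertex $w$ of degree $t$ in $G$; it appears with degree $t$ in the given card $G_i$ precisely when $v_i$ is a non-neighbour of $w$ and $i\ne\mathrm{ind}(w)$, so among the indices $1,\dots,n-k$ at most $t+1$ are ``bad'', and $w$ contributes at least $n-1-t-k$ to $s_t$. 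Summing over the $d_t$ such vertices gives $s_t\ge (n-1-t-k)d_t$. For $t\le (n-2)/2$ we have $n-1-t\ge n/2$ and $k\le n/3$, so $(n-1-t-k)/(n-1-t)\ge \tfrac13$, whence $d_t^*>s_t/(n-1-t)-1\ge \tfrac13 d_t-1\ge \tfrac14 d_t-1$.

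Finally, for $t$ in the upper half I would pass to the complement: by Observation \ref{obs:complementarity} the deck of $\overline{G}$ is computable from the given cards and $d_j(\overline{G})=d_{n-1-j}(G)$, so applying the construction above to $\overline{G}$ at index $n-1-t\le (n-2)/2$ produces a computable value whose bounds translate back exactly into $\tfrac14 d_t(G)-1\le d_t^*\le d_{t-1}(G)+d_t(G)+d_{t+1}(G)$. The ranges $t\le (n-1)/2$ and $t\ge (n-1)/2$ together cover all $t$; the only delicate case is the single middle index $t=(n-1)/2$ for odd $n$, where $n-1-t=t$ and the direct estimator must use denominator $t+1$ rather than $n-1-t$. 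There the stray coefficient is $\tfrac{t+1}{t}=1+O(1/n)$, so the upper bound still reads $d_t^*\le d_t+d_{t+1}$, and the lower bound gives $(t-k)/(t+1)\ge\tfrac14$ for large $n$; this is exactly the slack between the achieved constant $\tfrac13$ and the required $\tfrac14$, which I expect to absorb the boundary index cleanly.
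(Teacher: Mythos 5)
Your proof is correct, but it takes a genuinely different route from the paper's. The paper defines $d_t^*=\max\{d_t(G_i):1\leq i\leq n-k\}$ for $t<\frac n2$, gets the upper bound immediately (each degree-$t$ vertex of a card has degree $t$ or $t+1$ in $G$), and obtains the lower bound by double-counting the number $N$ of incidences in which a degree-$t$ vertex of $G$ appears with degree $t-1$ on a given card, sandwiching $N$ between $td_t$ and $\sum_i(d_t-d_t(G_i)-1)$. You instead use the averaged statistic $\lfloor s_t/(n-1-t)\rfloor$, take the upper bound from the identity of Lemma \ref{lem:stsize} with $\varepsilon_t\geq0$, and prove the lower bound by the direct observation that each degree-$t$ vertex shows degree $t$ on at least $n-1-t-k$ of the given cards, so $s_t\geq(n-1-t-k)d_t$; both arguments then finish via complementation for large $t$. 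Your per-vertex count is arguably cleaner and gives the slightly better constant $\tfrac13$ away from the boundary, at the cost of leaning on Lemma \ref{lem:stsize} and of having to treat the middle index $t=\frac{n-1}2$ (odd $n$) separately, where your constant $(t-k)/(t+1)\geq\tfrac14$ needs $n\geq15$ or so --- harmless in the paper's ``$n$ sufficiently large'' setting, but note the lemma as stated carries no such hypothesis, whereas the paper's max-based estimator covers all $t<\frac n2$ uniformly for every $n$. Both estimators are computable from the cards, so either proof is acceptable.
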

\begin{proof}
We will consider two cases: when $t<\frac n2$ and when $t\geq\frac n2$.\\
\newline
\textit{Case 1:} $t<\frac n2$.\\
Define
\begin{equation}
\label{eq:dtstar1}
d_t^*=d_t^*(G)=\max\{d_t(G_i):1\leq i\leq n-k\}.
\end{equation}
Note that $d_t^*$ can be calculated from the given cards and that $d_t^*\leq d_t+d_{t+1}$ by Lemma \ref{lem:magicformula}.

Let $N$ be the number of times a vertex of degree $t$ in $G$ is seen as a vertex of degree $t-1$ in the cards $G_1,\dots,G_{n-k}$. We will find upper and lower bounds for $N$. For the upper bound, note that a vertex of degree $t$ appears as a vertex of degree $t-1$ on the card $G_i=G-v_i$ if and only if $v_i$ is one of its neighbours. Therefore, $N\leq td_t$.

Now consider the card $G_i$ for some $i\in[n-k]$. We claim that there are at least $d_t-1-d_t(G_i)$ vertices that have degree $t-1$ in $G_i$ but degree $t$ in $G$. Indeed, the only missing vertex is $v_i$ (which might have degree $t$) and at most $d_t(G_i)$ of the other vertices with degree $t$ in $G$ have degree $t$ in $G_i$. It follows that $N\geq\sum_{i=1}^{n-k}(d_t-1-d_t(G_i))$. We combine these bounds on $N$ to get
\begin{equation*}
td_t\geq N\geq\sum_{i=1}^{n-k}(d_t-1-d_t(G_i))\geq(n-k)(d_t-d_t^*-1).
\end{equation*}
Rearranging and using the assumptions that $t<\frac n2$ and $n-k\geq\frac{2n}3$, we find $\frac23d_t^*\geq\frac16d_t-\frac23$. It follows that $d_t^*\geq\frac14d_t-1$.\\
\newline
\textit{Case 2:} $t\geq\frac n2$.\\
Define
\begin{equation}
\label{eq:dtstar2}
d_t^*=d_{n-1-t}^*(\overline{G}).
\end{equation}
As $n-1-t<\frac n2$, this is well-defined. From the argument above, we have
\begin{equation*}
\frac14d_{n-1-t}(\overline{G})-1\leq d_{n-1-t}^*(\overline{G})\leq d_{n-1-t}(\overline{G})+d_{n-t}(\overline{G}).
\end{equation*}
By Observation \ref{obs:complementarity}, we see that
\begin{equation*}
\frac14d_t(G)-1\leq d_{n-1-t}^*(\overline{G})=d_t^*\leq d_t(G)+d_{t-1}(G).
\end{equation*}
As $d_{t-1}$ and $d_{t+1}$ are both non-negative for every value of $t$, the result follows.
\end{proof}
\noindent
In the proof of Theorem \ref{thm:sizereconstructibility}, we will compare the unknown sequence $(d_t)$ to a sequence $(\widetilde{d}_t)$ that can be calculated from the cards. In order to do this, we will need to know some values of $d_t$ exactly.
For the proof we will only need the following lemma in the case when $\beta=\frac12$ and $t$ lies in the interval $[\frac n3,\frac{2n}3]$. However, the result may be useful elsewhere and so we state it in a more general form. 
\begin{lemma}
\label{lem:findingdt}
Suppose $0\leq\beta<1$ and let $\gamma=\frac34+\frac14\beta$. Suppose $n$ is sufficiently large and $k=O(n^{\beta})$. Then, for any graph $G$ of order $n$ and any deck of $n-k$ cards, the value of $d_t$ can be calculated exactly for all but $O(n^{\gamma})$ values of $t$.
\end{lemma}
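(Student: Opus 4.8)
The plan is to read off $d_t$ directly from the single relation supplied by Lemma \ref{lem:stsize}, namely $(n-1-t)d_t+(t+1)d_{t+1}=s_t+\varepsilon_t$ with $s_t$ known and $0\le\varepsilon_t\le k(d_t+d_{t+1})$. The two unknowns $d_t,d_{t+1}$ are coupled, but I claim $d_t$ is pinned down for most $t$ once we know that $d_t$ and $d_{t+1}$ are both small. To detect smallness I would use Lemma \ref{lem:dtstar}: fixing a threshold $B$ (to be optimised) and setting $\tau=\lfloor B/4\rfloor-1$, call $t$ \emph{light} if $d_t^*\le\tau$ and $d_{t+1}^*\le\tau$, so that $d_t,d_{t+1}\le B$. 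Since $d_t^*\le d_{t-1}+d_t+d_{t+1}$ forces $\sum_t d_t^*\le 3n$, at most $O(n/\tau)=O(n/B)$ values of $t$ fail to be light.

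For a light $t$ with $t\le n/2$ I would determine $d_t$ as follows. Call a pair $(a,b)\in\{0,\dots,B\}^2$ \emph{consistent} if $s_t\le(n-1-t)a+(t+1)b\le s_t+k(a+b)$; the true pair $(d_t,d_{t+1})$ is consistent and lies in this box, and consistency is checkable from the cards. If every consistent pair has the same first coordinate, that coordinate must be $d_t$. Thus $d_t$ is recovered unless two consistent pairs differ in their first coordinate, which forces a nonzero integer vector $(\delta,\eta)$ with $|\delta|,|\eta|\le B$, $\delta\ne0$ and $|(n-1-t)\delta+(t+1)\eta|\le 2kB$. When $t$ is far below $n/2$ the coefficient $n-1-t$ dominates and no such vector exists, but near $n/2$ the coefficients $n-1-t$ and $t+1$ are comparable and the split between $d_t$ and $d_{t+1}$ can genuinely be ambiguous. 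Values $t\ge n/2$ are handled symmetrically by passing to $\overline{G}$ through Observation \ref{obs:complementarity}, which is precisely why $d_t^*$ was defined via the complement in Lemma \ref{lem:dtstar}.

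The main work, and the step I expect to be the real obstacle, is bounding the number of \emph{bad} light $t$ that admit such a vector $(\delta,\eta)$. The key observation is that $(n-1-t)\delta+(t+1)\eta=(n-1)\delta+\eta+t(\eta-\delta)$ is linear in $t$ with slope $\eta-\delta$. When $\eta=\delta$ its value is $n\delta$, which exceeds $2kB$ for $\delta\ge1$; when $\eta\ne\delta$, at most $O\!\left(kB/|\eta-\delta|\right)$ values of $t$ make its absolute value at most $2kB$. Summing over all $(\delta,\eta)\in\{1,\dots,B\}\times\{-B,\dots,B\}$ and using $\sum_{\eta}1/|\eta-\delta|=O(\log B)$ gives at most $O(kB^2\log B)$ bad values of $t$.

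Combining the two losses, the number of undetermined $t$ is $O(n/B+kB^2\log B)$. Choosing $B$ of order $(n/k)^{1/3}$ balances the first two terms and yields $O\!\left(n^{2/3}k^{1/3}\log n\right)$; since $k=O(n^{\beta})$ this is $O\!\left(n^{(2+\beta)/3}\log n\right)$. As $(2+\beta)/3<\tfrac34+\tfrac14\beta=\gamma$ for $0\le\beta<1$, the strict gap of $(1-\beta)/12$ in the exponents absorbs the logarithmic factor, so this is $O(n^{\gamma})$ for $n$ large, completing the proof.
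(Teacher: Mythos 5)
Your proposal is correct, and it follows the same two--phase skeleton as the paper: first use Lemma \ref{lem:dtstar} and the bound $\sum_t d_t^*\leq 3n$ to restrict attention to indices $t$ where $d_t$ and $d_{t+1}$ are both at most some threshold, then invert the single linear relation $(n-1-t)d_t+(t+1)d_{t+1}=s_t+\varepsilon_t$ from Lemma \ref{lem:stsize} outside an exceptional set of $t$ for which the relation is genuinely ambiguous. Where you diverge is in how that exceptional set is counted, and your version is sharper. The paper normalises by $n$, writes $q=\frac{t+1}{n}$, and declares $t$ bad when $q$ lies within $\frac{2\varepsilon_t}{n}$ of a rational $\frac{x}{y}$ with $y\leq 2K-2$; it then bounds the bad set by (number of rationals)$\times$(a uniform interval length of order $k K$), giving $O(kK^3)$ and forcing the balance $n/K=kK^3$, i.e.\ the exponent $\gamma=\frac{3+\beta}{4}$. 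You instead work directly with integer difference vectors $(\delta,\eta)$ and observe that the bad window for a fixed vector has length of order $kB/|\eta-\delta|$ --- effectively recovering the $1/y$ factor the paper discards when it clears denominators --- so the harmonic sum gives $O(kB^2\log B)$ rather than $O(kB^3)$, and the optimal balance yields exponent $\frac{2+\beta}{3}<\gamma$, with the gap $\frac{1-\beta}{12}$ comfortably absorbing the logarithm. Two small points: the sum of the ``$+1$'' terms (one integer point per vector even when the window is short) contributes an extra $O(B^2)=O(n^{2/3})$, which you should mention but which is harmless; and your detour through $\overline{G}$ for $t\geq n/2$ is not actually needed, since your counting of bad $t$ via the linear form $(n-1)\delta+\eta+t(\eta-\delta)$ is uniform over all $t\in\{0,\dots,n-1\}$ --- unlike in Lemma \ref{lem:dtstar}, where the complement really is essential.
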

\begin{proof}
Recall from Lemma \ref{lem:magicformula} that
\begin{equation*}
s_t = \sum_{i=1}^{n} d_t(G_i)=(n-1-t)d_t+(t+1)d_{t+1}
\end{equation*}
and that $\widetilde{s}_t=\sum_{i=1}^{n-k}d_t(G_i)$ approximates $s_t$ where $0\leq s_t-\widetilde{s}_t \leq k(d_t+d_{t+1})$. Let $q=\frac{t+1}n\in[0,1]$. Then $\frac{s_t}n=(1-q)d_t+qd_{t+1}$ and 
\begin{equation*}
\left|\frac{s_t}n-\frac{\widetilde{s_t}}n\right| \leq \frac{k(d_t+d_{t+1})}n.
\end{equation*}
Our goal will be to find values of $t$ for there is only one choice of $(a,b)$ such that $\left|(1-q)a+qb-\frac{\widetilde{s}_t}n\right|\in\left[0,\frac{k(d_t+d_{t+1})}n\right]$. 

To achieve this, we first restrict to those values of $t$ for which we can calculate an upper bound on $d_t$ and $d_{t+1}$ from the cards. Assume that $n$ is sufficiently large to ensure $k\leq\frac n3$. Lemma \ref{lem:dtstar} then applies to ensure that, for all $t$ the quantity $d_t^*$ (which is defined in \eqref{eq:dtstar1} and \eqref{eq:dtstar2} and can be calculated from the cards) satisfies $\frac14d_t-1\leq d_t^*\leq d_{t-1}+d_t+d_{t+1}$. By the lower bound, if $d_t^*$ is small then $d_t$ is small as well. We use the upper bound to show that $d_t^*$ is small for most values of $t$.
Indeed, let $K=n^{1-\gamma}$, $I=\{0,\dots,n-1\}$ and $A=\{t\in I:d_t^*+1\geq\frac14K\}$. Then
\begin{equation*}
\frac{1}4K|A|\leq \sum_{t\in A}(d_t^*+1)\leq\sum_{t\in A} (d_{t-1}+d_t+d_{t+1}+1)\leq4n.
\end{equation*}
and hence $|A|\leq 16n/K=16n^\gamma$. For all $t$ in the set $I'=\{t\in I:t,t+1\not\in A\}$, we know that $d_t,d_{t+1}< K$. Since $|A|=|\{a:a+1\in A\}|$, by restricting to $I'$, we remove at most $O(n^\gamma)$ potential $t$. 

For all $t\in I'$, we know that 
\begin{equation*}
0\leq (1-q)a+qb-\frac{\widetilde{s}_t}n \leq \frac{k(d_t+d_{t+1})}n< \frac{2Kk}n
\end{equation*}
It remains to determine for which $q=\frac{t+1}n$ the following holds: any two elements in $X=\left\{(1-q)a+qb:a,b\in\left\{0,\dots\left\lfloor K\right\rfloor\right\}\right\}$ take values that are at least $\frac{4Kk}n$ apart, so that there is at most one $(1-q)a+qb\in X$ within $\frac{2Kk}n$ of $\widetilde{s}_t$. For all such $t\in I'$, we can then reconstruct $d_t$ and $d_{t+1}$ from the cards as the unique choices for $a$ and $b$.

Let $M=\frac{4Kk}n$. Suppose that, for some $\delta< M$, we are able to find elements $a>a'$ and $b<b'$ within $\left\{0,\dots,\left\lfloor K\right\rfloor\right\}$ satisfying $a(1-q)+bq=a'(1-q)+b'q+\delta$. Rearranging, we get
\begin{equation*}
a-a'=(b'-b+a-a')q+\delta.
\end{equation*}
In particular, $(b'-b+a-a')q+\delta$ is an integer. As $b'-b+a-a'\in \left\{1,\dots,\left\lfloor2K\right\rfloor\right\}$, it suffices to ensure that, for all $y\in \left\{1,\dots,\left\lfloor2K\right\rfloor\right\}$, $yq$ is at distance at least $M$ from all integers $x\in\left\{1,\dots,\left\lfloor K\right\rfloor\right\}$. Let
\begin{equation*}
R=\left\{\frac xy:x\in\left\{1,\dots,\left\lfloor K\right\rfloor\right\},y\in\left\{1,\dots,\left\lfloor2K\right\rfloor\right\}\right\}
\end{equation*}
and
\begin{equation*}
S=\left\{t:\exists r\in R\text{ such that }\left|\frac{t+1}n-r\right|<M\right\}.
\end{equation*}
As argued above, for each $t\in I'\setminus S$ we are able to ``guess'' the values of $d_t$ and $d_{t+1}$. It remains to bound the size of $S$. The set $R$ has size less than $2K^2$. For each choice of $r\in R$, there are at most $2Mn$ elements of the form $\frac{i}n$ with $i\in \{0,\dots,n-1\}$ that are within $M$ of $r$. This shows that $|S|\leq 2Mn|R|\leq 16kK^3$. Recall that $k= O(n^\beta)$,  $16K^3=O(n^{3(1-\gamma)})$ and $\gamma=\frac34+\frac14\beta$. We calculate
\begin{equation*}
\beta+3(1-\gamma)=\beta+3\left(\frac14-\frac1{4}\beta\right)=\gamma.
\end{equation*}
Let $J=I'\setminus S$. For every $t\in J$, we can calculate $d_t$ exactly and furthermore $|I\setminus J|=|(I\setminus I')\cup S)|=O(n^\gamma)$ as desired.
\end{proof}
\noindent
Since $\gamma<1$, the result shows that we can reconstruct $d_t$ for all but $o(n)$ of the $t\in [0,n]$.

\subsection{Proof of main result}
\label{subsec:main_result}
We are now ready to prove Theorem \ref{thm:sizereconstructibility}, which is restated below.
\sizereconstructability*

\begin{proof}
Let $n$ be sufficiently large and $k=\lfloor \frac1{20}\sqrt{n}\rfloor$. Let $G$ be a graph on $n$ vertices and let $G_1,\dots,G_{n-k}$ be the $n-k$ cards of $G$ that we are given.

Our goal is to determine $d_t$ for many values of $t$. We will handle values of $t$ for which $d_t>\sqrt{n}$ separately from those $t$ where $d_t\leq \sqrt{n}$.  For this reason, it will be convenient to say that $d_t$ is \textit{big} if $d_t>\sqrt{n}$ and \textit{little} if $d_t\leq \frac34\sqrt{n}$.

\begin{claim}
Suppose that, for some $t\leq\frac{2n}3-1$, the value of $d_{t+1}$ is known exactly and is not big. Then either $d_t$ can be calculated exactly or $d_t$ can be identified as being big.
\end{claim}
\begin{claimproof}
Since we can calculate $\widetilde{s}_t=\sum_{i=n-k+1}^n d_t(G_i)$ from the cards, if $d_{t+1}$ is known, then we can calculate 
\begin{equation*}
d'_t=\frac1{n-1-t}(\widetilde{s}_t-(t+1)d_{t+1})
\end{equation*}
from the cards. By Lemma \ref{lem:magicformula}, 
\begin{equation*}
d_t = d_t' +\frac{s_t-\widetilde{s}_t}{n-1-t}
\end{equation*}
where $0\leq s_t-\widetilde{s}_t\leq k(d_t+d_{t+1})$. In particular $d_t\geq d'_t$, so we recognise that $d_t$ is big if $d'_t> \sqrt{n}$. We now show that, if $d'_t\leq \sqrt{n}$, then the closest integer to $d'_t$ equals $d_t$.

Since $t+1\leq \frac{2n}3$ and $d_{t+1}$ is not big,
\begin{equation}
\label{eq:calculation}
\frac{s_t-\widetilde{s}_t}{n-1-t}\leq \frac3n k(d_t+d_{t+1})\leq \frac3{n} k(d_t+\sqrt{n})\leq \frac3{20\sqrt{n}}(d_t+\sqrt{n}).
\end{equation}
We conclude that $d_t-d_t'<\frac12$ if $d_t\leq 2\sqrt{n}$. Hence the closest integer to $d'_t$ equals $d_t$ in this case. 

From the calculation in \eqref{eq:calculation} we also find
\begin{equation*}
d_t'\geq d_t-\frac{s_t-\widetilde{s}_t}{n-1-t} > d_t - \frac3{20\sqrt{n}}(d_t+\sqrt{n}) \geq \frac12 d_t> \sqrt{n}
\end{equation*}
if $d_t>2\sqrt{n}$. Hence either $d'_t>\sqrt{n}$ (in which case $d_t$ is big) or rounding it to the nearest integer gives us $d_t$ exactly.
\end{claimproof}

\begin{claim}
Suppose that, for some $t\geq\frac n3+1$, the value of $d_{t-1}$ is known exactly and is not big. Then either $d_t$ can be calculated exactly or $d_t$ can be identified as being big.
\end{claim}
\begin{claimproof}
If $t\geq \frac n3+1$, then $n-t-1\leq\frac{2n}3-1$. By Observation \ref{obs:complementarity}, we have $d_{n-t}(\overline{G})=d_{t-1}(G)$. Apply Claim 1 to $\overline{G}$ to see that either $d_t(G)=d_{n-t-1}(\overline{G})$ can be calculated exactly or it can be identified as being big.
\end{claimproof}
\begin{claim}
The interval $[\frac n3,\frac{2n}3]$ contains $2k$ consecutive values of $t$ such that every $d_t$ can be calculated exactly and they are all little.
\end{claim}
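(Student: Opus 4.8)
The plan is to locate a long run of consecutive values of $t$ on which every $d_t$ is \emph{small}, and then to use the propagation of Claims 1 and 2 to compute each $d_t$ on that run exactly. The crucial observation is that, although Lemma~\ref{lem:findingdt} with $\beta=\frac12$ only gives $d_t$ directly for all but $O(n^{7/8})$ values of $t$, the propagation lets us recover the remaining values \emph{as long as they lie in a run of small values}. Indeed, given a neighbour $d_{t\pm1}$ that is known exactly and not large, Claim 1 (respectively Claim 2) returns $d_t$ exactly whenever the true value is at most $2\sqrt{n}$, with no reference to whether $t$ belonged to the exceptional set of Lemma~\ref{lem:findingdt}. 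Thus propagation ``heals'' that exceptional set within any run of small values.

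First I would bound the obstructions. Since every vertex of $G$ has exactly one degree, $\sum_{t=0}^{n-1}d_t=n$, so the number of $t$ with $d_t>\frac34\sqrt n$ (that is, not small) is strictly less than $\frac43\sqrt n$. Deleting these not-small values partitions $[\frac n3,\frac{2n}3]$ into at most $\frac43\sqrt n+1$ maximal runs of consecutive $t$ on which $d_t$ is small. Using $2k\leq\frac1{10}\sqrt n$, the runs of length less than $2k$ contribute total length at most $(\frac43\sqrt n+1)\cdot 2k\leq\frac2{15}n+o(n)$, so the runs of length at least $2k$ have total length at least $\frac n3-\frac43\sqrt n-\frac2{15}n-o(n)=\frac n5-o(n)$. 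On the other hand, the exceptional set of Lemma~\ref{lem:findingdt} has size $O(n^{7/8})=o(n)$. Since the long runs cover a positive proportion of the interval while the exceptional set is $o(n)$, for $n$ large at least one run of length at least $2k$ contains a value $t^*$ at which $d_{t^*}$ is known exactly and small.

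Finally I would anchor at $t^*$ and fill in the whole run. Restricting attention to $[\frac n3+1,\frac{2n}3-1]$ (which changes the counting only by $O(1)$) ensures that every index used satisfies the hypotheses $t\leq\frac{2n}3-1$ of Claim 1 and $t\geq\frac n3+1$ of Claim 2. Propagating downward from $t^*$ by repeated application of Claim 1 and upward by repeated application of Claim 2, each step has a neighbour that is known exactly and small, hence not large; since every true value on the run is at most $\frac34\sqrt n\leq2\sqrt n$, each application returns the exact value rather than merely flagging it as large. This computes $d_t$ exactly for all $t$ in a run of length at least $2k$ inside $[\frac n3,\frac{2n}3]$, all of which are small, which is the desired conclusion.

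The step I expect to be the main obstacle is precisely the size of the exceptional set from Lemma~\ref{lem:findingdt}: it is $O(n^{7/8})$, far larger than $\sqrt n$, so one cannot simply avoid it by pigeonholing over disjoint blocks of length $2k$. The plan circumvents this by noting that propagation repairs the exceptional set within any small-valued run, so the only genuine obstructions to a long run are the not-small values, of which there are only $O(\sqrt n)$.
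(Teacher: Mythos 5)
Your proposal is correct and follows essentially the same strategy as the paper: anchor at a point of $[\frac n3,\frac{2n}3]$ where Lemma \ref{lem:findingdt} (with $\beta=\frac12$) gives $d_t$ exactly, then propagate through a run of small values via Claims 1 and 2, using that fewer than $\frac43\sqrt n$ values of $t$ fail to be small. The only difference is bookkeeping: the paper partitions the interval into disjoint blocks of length $2k$ and pigeonholes over blocks, whereas you sum the lengths of maximal small runs; both reduce to the same comparison of $O(n^{7/8})$ and $O(\sqrt n)$ against a positive fraction of $n$.
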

\begin{claimproof}
Let $I=[\frac n3,\frac{2n}3]\cap\mathbb{N}$. Lemma \ref{lem:findingdt} with $\beta=\frac12$ gives a set $J\subseteq I$ and a constant $c$ such that $|J|\leq cn^{\frac78}$ and we can calculate $d_t$ exactly if $t\in I\setminus J$.

Partition $I$ into $\left\lfloor\frac n{6k}\right\rfloor$ intervals of length $2k$. At most $\left\lfloor\frac{cn^{7/8}}{2k}\right\rfloor$ of them are completely contained in $J$. For $n$ sufficiently large, $\left\lfloor\frac n{6k}\right\rfloor-\left\lfloor\frac{cn^{7/8}}{2k}\right\rfloor\geq\frac n{8k}$. Therefore, for these values of $n$, there are at least $\frac n{8k}$ intervals which are not completely contained within $J$. By Claims 1 and 2, we are able to calculate $d_t$ exactly for all values of $t$ in each of these intervals unless the interval happens to contain a value of $t$ for which $d_t$ is big.

We know that there are at most $\frac43\sqrt{n}$ values of $t\in\{0,\dots,n-1\}$ for which $d_t$ is not little. Therefore, as $\frac n{8k}\geq \frac{5}2\sqrt{n}>\frac43\sqrt{n}$, there exists an interval which is not completely contained within $J$ and which only contains values of $d_t$ that are little, each of which we can calculate exactly.
\end{claimproof}
\noindent
By Claim 3, we can find an interval $\mathcal{I}=\{b,b+1,\dots,b+2k-1\}\subset[\frac n3,\frac{2n}3]$ such that, for every $t\in\mathcal{I}$, we can calculate $d_t$ exactly and it is little. We may then recursively apply Claim 1, starting with $t+1=b$. We continue until either we reach $d_0$ or we hit a big vertex $d_{t_\ell}$ for some $t_\ell<b$. Similarly, we may recursively apply Claim 2, starting with $t-1=b+2k-1$. Again, we will either calculate $d_{n-1}$ or we will identify that $d_{t_r}$ is big for some $t_r>b+2k-1$.

If we are able to calculate both $d_0$ and $d_{n-1}$, then we will know $d_t$ for every $t\in\{0,\dots,n-1\}$. This tells us the degree sequence of $G$ and hence we can directly calculate $m$.

Therefore, we may assume that we have the following situation: there exists an interval $\mathcal{J}\supseteq\mathcal{I}$ with endpoints $t_\ell$ and $t_r$ such that $t_\ell<t_r$. For every $t\in\mathcal{J}\setminus\{t_\ell,t_r\}$, the value $d_t$ is known exactly and is not big. At least one of $d_{t_\ell}$ and $d_{t_r}$ has been identified as being big. By Observation \ref{obs:complementarity}, we may assume that $d_{t_\ell}$ is big.

By Lemma \ref{lem:edges}, the estimate $\widetilde{m}$ for $m$ that we can obtain from the cards $G_1,\dots,G_{n-k}$ satisfies $\widetilde{m}=m+\alpha$ with $0\leq\alpha\leq \left\lfloor\frac{k(n-1)}{n-2-k}\right\rfloor$. For $n$ sufficiently large, we have $n-1<2(n-2-k)$ and hence $\alpha<2k$. Recall from the proof overview that $\widetilde{d}_t=|\{i\in\{1,\dots,n-k\}:\widetilde{m}-e(G_i)=t\}|$ can be calculated from the cards and that our goal is to discover the ``shift'' $\alpha=\widetilde{m}-m$ in this sequence. The overall shape of $\widetilde{d}_0,\dots,\widetilde{d}_{n-1}$ will be the same as the overall of shape of $d_0,\dots,d_{n-1}$ but shifted to the right by $\alpha$. Moreover, we are ``missing'' $k$ values, so that $\sum_{t=0}^{n-1}|d_t-\widetilde{d}_{t+\alpha}|=k$. (Note that we need to calculate $\widetilde{d}_t$ for $0\leq t\leq n+2k$ and that, for $t+\alpha\geq n$, it is possible for $\widetilde{d}_{t+\alpha}$ to take a non-zero value.) 

Although we do not know the exact value of $d_{t_\ell}$, it is sufficient to redefine each $d_{t}$ and $\widetilde{d}_t$ to be the minimum of their current value and $\sqrt{n}$. After doing this, we still have $\sum_{t=0}^{n-1}|d_t-\widetilde{d}_{t+\alpha}|\leq k$. It follows that $\sum_{t=t_\ell}^{t_r-1}|d_t-\widetilde{d}_{t+\alpha}|\leq k$. We now show that $\alpha$ can be recognised as the unique ``shift'' $s$ in a given interval that ensures $\widetilde{d}_{t+s}$ is sufficiently close to $d_t$.

\begin{claim}
For $s\in\{0,\dots,2k-1\}$, $\sum_{t=t_\ell}^{t_r-1}|d_t-\widetilde{d}_{t+s}|\leq k$ if and only if $s=\alpha$.
\end{claim}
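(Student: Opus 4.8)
The plan is to treat the two directions separately. The forward direction is free: taking $s=\alpha$ is exactly the inequality $\sum_{t=t_\ell}^{t_r-1}|d_t-\widetilde d_{t+\alpha}|\le k$ recorded immediately before the claim. So the entire content lies in the converse, and I would fix $s\in\{0,\dots,2k-1\}$ with $s\neq\alpha$, set $j=s-\alpha$ (so $j\neq0$ and $|j|\le 2k-1$), and aim to prove $\sum_{t=t_\ell}^{t_r-1}|d_t-\widetilde d_{t+s}|>k$.

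First I would transfer the comparison from the estimated sequence back to the (capped) true sequence. Since $\widetilde d_{t+s}=\widetilde d_{(t+j)+\alpha}$, the triangle inequality gives $|d_t-\widetilde d_{t+s}|\ge|d_t-d_{t+j}|-|d_{t+j}-\widetilde d_{(t+j)+\alpha}|$. Summing over $t_\ell\le t\le t_r-1$ and reindexing $u=t+j$, the subtracted terms form a partial sum of $\sum_u|d_u-\widetilde d_{u+\alpha}|\le k$ (the out-of-range terms vanishing), so their total is at most $k$. This yields
\[
\sum_{t=t_\ell}^{t_r-1}|d_t-\widetilde d_{t+s}|\ \ge\ \sum_{t=t_\ell}^{t_r-1}|d_t-d_{t+j}|\ -\ k,
\]
reducing everything to showing the shifted total variation $\sum_{t=t_\ell}^{t_r-1}|d_t-d_{t+j}|$ is at least $\frac14\sqrt n$; the claim then follows because $\frac14\sqrt n-k>k$ whenever $k\le\frac1{20}\sqrt n$.

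For the total-variation bound I would exploit the one large capped entry together with the block of small entries. After the capping, $d_{t_\ell}=\sqrt n$ while $d_t\le\frac34\sqrt n$ for every $t$ in the block $\mathcal I=\{b,\dots,b+2k-1\}\subseteq(t_\ell,t_r)$. Consider the chain $t_\ell,\,t_\ell+|j|,\,t_\ell+2|j|,\dots$ increasing with step $|j|$. Because $|j|<2k$ is smaller than the length of $\mathcal I$, the chain cannot step over $\mathcal I$, so its first entry $t_\ell+M|j|$ inside $\mathcal I$ exists; all earlier chain entries lie in $[t_\ell,t_r-1]$, and each consecutive difference $|d_{t_\ell+i|j|}-d_{t_\ell+(i+1)|j|}|$ is a distinct term $|d_t-d_{t+j}|$ of the sum (taking $t=t_\ell+i|j|$ if $j>0$ and $t=t_\ell+(i+1)|j|$ if $j<0$). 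Telescoping,
\[
\sum_{i=0}^{M-1}\big|d_{t_\ell+i|j|}-d_{t_\ell+(i+1)|j|}\big|\ \ge\ \big|d_{t_\ell}-d_{t_\ell+M|j|}\big|\ \ge\ \sqrt n-\tfrac34\sqrt n\ =\ \tfrac14\sqrt n,
\]
and since these are nonnegative terms of $\sum_{t=t_\ell}^{t_r-1}|d_t-d_{t+j}|$, that sum is at least $\frac14\sqrt n$, as required.

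The main obstacle is the second step, and specifically the bookkeeping that legitimises the telescoping chain: one must verify, for both signs of $j$, that each consecutive difference really is one of the terms indexed by $t\in\{t_\ell,\dots,t_r-1\}$, that the chain stays inside this index range until it first meets $\mathcal I$, and that the bound $|j|<2k=|\mathcal I|$ is exactly what prevents the chain from skipping over $\mathcal I$. This is where the two structural inputs are consumed---namely that $d_{t_\ell}$ has been identified as large and that Claim 3 supplies a run of $2k$ consecutive small, exactly-known values---and it explains why that run was arranged to have length precisely $2k$.
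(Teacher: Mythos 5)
Your proposal is correct and follows essentially the same route as the paper: the same triangle-inequality decomposition with the reindexed error sum bounded by $k$, and the same telescoping chain from the large capped value $d_{t_\ell}$ into the block $\mathcal{I}$ of $2k$ small values, using $|s-\alpha|<2k=|\mathcal{I}|$ so the chain cannot skip over $\mathcal{I}$. The only differences are cosmetic (you phrase the threshold as $\frac14\sqrt n-k>k$ rather than requiring the total variation to exceed $2k$, and you treat both signs of $s-\alpha$ uniformly via $|j|$).
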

\begin{claimproof}
Fix $s\in\{0,\dots,2k-1\}$. We noted above that $\sum_{t=t_\ell}^{t_r-1}|d_t-\widetilde{d}_{t+\alpha}|\leq k$. It remains to show that $\sum_{t=t_\ell}^{t_r-1}|d_t-\widetilde{d}_{t+s}|> k$ if $s\neq \alpha$.
Let $s\in \{0,\dots,2k-1\}\setminus\{\alpha\}$.
We have
\begin{align}
\sum_{t=t_\ell}^{t_r-1}|d_t-\widetilde{d}_{t+s}|&=\sum_{t=t_\ell}^{t_r-1}|d_t-d_{t+s-\alpha}+d_{t+s-\alpha}-\widetilde{d}_{t+s}| \nonumber \\
&\geq \sum_{t=t_\ell}^{t_r-1}|d_t-d_{t+s-\alpha}|-\sum_{t=t_\ell}^{t_r-1}|d_{t+s-\alpha}-\widetilde{d}_{t+s}|. \label{eq:sum}
\end{align}
Since $\sum_{t=0}^{n-1}|d_t-\widetilde{d}_{t+\alpha}|\leq k$, it follows that
\begin{equation*} \sum_{t=t_\ell}^{t_r-1}|d_{t+s-\alpha}-\widetilde{d}_{t+s}|=\sum_{t=t_\ell+s-\alpha}^{t_r+s-\alpha-1}|d_{t}-\widetilde{d}_{t+\alpha}|\leq k.
\end{equation*}
Hence, \eqref{eq:sum} will be strictly greater than $k$ whenever $\sum_{t=t_\ell}^{t_r-1}|d_t-d_{t+s-\alpha}|>2k$.

Recall that the interval $[t_\ell,t_r-1]$ contains some interval $\mathcal{I}$ of $2k$ consecutive values of $t$ such that every $d_t$ is little. As $s\leq2k-1$ and $s\neq\alpha$, there exists some $\eta\in\mathbb{Z}$ such that $t_\ell+\eta(s-\alpha)\in\mathcal{I}$, where $\eta(s-\alpha)>0$. First assume $\eta>0$. Since $d_{t_\ell}$ is big and $d_{t_\ell+\eta(s-\alpha)}$ is little, we find
\begin{align*}
\sum_{t=t_\ell}^{t_r-1}|d_t-d_{t+s-\alpha}|&\geq\sum_{i=0}^{\eta-1}|d_{t_\ell+i(s-\alpha)}-d_{t_\ell+(i+1)(s-\alpha)}|\\
&\geq|d_{t_\ell}-d_{t_\ell+\eta(s-\alpha)}|\\
&\geq \sqrt{n}-\frac34\sqrt{n}=\frac14\sqrt{n}\\
&>2k.
\end{align*}
\noindent
If $\eta<0$, then $\alpha-s>0$ and
\begin{align*}
\sum_{t=t_\ell}^{t_r-1}|d_t-d_{t+s-\alpha}|&\geq\sum_{i=0}^{-\eta}|d_{t_\ell+(i+1)(\alpha-s)}-d_{t_\ell+i(\alpha-s)}| \geq|d_{t_\ell-\eta(\alpha-s)}-d_{t_\ell}|. 
\end{align*}
The result then follows in a similar fashion.
\end{claimproof}
\noindent
By Claim 4, we see that $\alpha$ is the only value $s\in\{0,\dots,2k-1\}$ satisfying $\sum_{t=t_\ell}^{t_r-1}|d_t-\widetilde{d}_{t+s}|\leq k$. As we have calculated $(d_t)_{t=t_\ell}^{t_r}$ and $(\widetilde{d}_t)$ from the cards, and we know $k$ as well, we are able to find the value  $s\in\{0,\dots,2k-1\}$ satisfying $\sum_{t=t_\ell}^{t_r-1}|d_t-\widetilde{d}_{t+s}|\leq k$, and hence identify $\alpha$. Once we have identified $\alpha$, we can then calculate $m=\widetilde{m}-\alpha$, the number of edges in $G$.
\end{proof}

\section{Conclusion}
\label{sec:conclusion}
We have shown that the size of a graph can be reconstructed if we are given a deck from which either at most $\missing\sqrt{n}$ cards are missing or at most $\adversary\sqrt{n}$ cards are false. The constants can be improved a little, although we do not know whether the result remains true with $\sqrt{n}$ missing cards. However, we suspect that stronger results could be proved by using more information about the degree sequences on the cards.

We also note that $c \sqrt{n}$ is still very far away from the best known lower bounds, which are linear. For example, for $n=3p+1$, Bowler, Brown and Fenner \cite{BBF10} have given the following two graphs which differ in the number of edges but have $\frac{2}3(n-1)$ cards in common: the graphs $G=2K_{p+1}+K_{p-1}$ and $H =K_{p+1}+2K_p$ both have $3p+1$ vertices and at least $2p$ cards of the form $K_{p+1}+K_p+K_{p-1}$.
We suspect that the lower bound is closer to the truth and propose the following question.
\begin{problem}
Does there exist some $\varepsilon>0$ such that, for any graph $G$ on $n$ vertices, we can reconstruct the number of edges of $G$ from any subset of at least $(1-\varepsilon)n$ cards?
\end{problem}
Another direction for future work is to reconstruct other graph parameters, such as the degree sequence or the number of triangles. Although our techniques do not immediately extend to this setting, we conjecture this should be possible from a partial deck as well.
\begin{conjecture}
\label{prob:counting}
Fix $k\in \mathbb{N}$ and a graph $H$ and let $n$ be sufficiently large. For every graph $G$ on $n$ vertices, the number of subgraphs of $G$ isomorphic to $H$ is reconstructible given any $n-k$ cards from $\mathcal{D}(G)$.
\end{conjecture}
\noindent
If we are given the entire deck $\mathcal{D}(G)$ (i.e. $k=0$), then this problem is solved by Kelly's Lemma \cite{K57}, which states that for any two graphs $G$ and $H$ with $|G|>|H|$, the number of subgraphs of $G$ isomorphic to $H$ is reconstructible.

If the number of edges is known, then the degree of a vertex can be calculated from the number of edges on its card. Therefore, by our main result, if $k\leq\missing\sqrt{n}$, then all but $k$ of the degrees are known. If $k$ is larger, then Lemma \ref{lem:findingdt} still allows us to construct most of the degree sequence. We expect that, for a large range of $k$, it is possible to determine the whole degree sequence exactly. As a first step, we make the following conjecture.
\begin{conjecture}
\label{conj:degree}
Fix $k\in\mathbb{N}$ and let $n$ be sufficiently large. For any graph $G$ on $n$ vertices, the degree sequence of $G$ is reconstructible from any $n-k$ cards.
\end{conjecture}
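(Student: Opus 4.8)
The plan is to reduce the problem to reconstructing finitely many power sums of the degree sequence. Since $k$ is fixed, we have $k\leq\missing\sqrt n$ for $n$ large, so Theorem \ref{thm:sizereconstructibility} lets us compute $m$ exactly. For each shown card $G_i$ we then recover the degree $d(v_i)=m-e(G_i)$ of its deleted vertex exactly, so the degrees of all $n-k$ shown vertices are known; only the degrees $e_1,\dots,e_k$ of the $k$ hidden vertices remain to be found. A multiset of $k$ integers is determined by its power sums $P_1,\dots,P_k$ via Newton's identities, and
\[
P_r=\sum_{j=1}^k e_j^{\,r}=p_r-\sum_{i=1}^{n-k}d(v_i)^r,\qquad p_r:=\sum_{v\in V(G)}d(v)^r .
\]
The subtracted sum is known, so it suffices to reconstruct $p_1,\dots,p_k$ from the $n-k$ cards. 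Here $p_1=2m$ is already known, so the new content is $p_2,\dots,p_k$, or equivalently the star counts $\sum_v\binom{d(v)}{r}$ (the number of copies of $K_{1,r}$) for $2\le r\le k$.

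First I would try the Kelly-type estimate for the star count $N_r:=\sum_v\binom{d(v)}{r}$. Each copy of $K_{1,r}$ lies on exactly $n-r-1$ cards, so averaging the copy-counts of the shown cards returns $N_r$ with an additive error at most $\frac{k}{\,n-r-1\,}N_r$, coming from the $k$ absent cards. When $N_r=o(n/k)$ this error is below $\tfrac12$ and rounding recovers $N_r$ exactly; this already handles graphs whose degree sequence has no heavy classes. The difficulty is entirely caused by heavy degree classes, i.e. values $t$ with $d_t$ large (possibly linear in $n$), which make $N_r$ large and the estimate too coarse to round.

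To control the heavy classes I would exploit that the shown degrees are known exactly and that only a bounded number of degree classes are heavy. Expanding $N_r(G)-\sum_{i=1}^{n-k}N_r(G_i)$ and collecting terms, every contribution is a polynomial in the known shown degrees apart from a controlled number of unknown counts describing how the hidden vertices attach to the heavy classes. In parallel, the relation of Lemma \ref{lem:stsize}, written as $\varepsilon_t=\sigma_t+(n-1-t)\mu_t+(t+1)\mu_{t+1}$ with $\mu_t:=d_t-\widehat d_t\ge0$ (where $\widehat d_t$ is the known number of shown vertices of degree $t$) and $\sigma_t$ known, pins down $\mu_t$ directly whenever $d_t$ and $d_{t+1}$ are small: the admissible values of $(n-1-t)\mu_t+(t+1)\mu_{t+1}$ are then well separated except when $q=(t+1)/n$ is close to a rational of denominator at most $2k$. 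As in Lemma \ref{lem:findingdt}, and because $k$ is constant, these exceptional windows are few and short, and one can hope to resolve them using neighbouring equations, the known value of $P_1$, and Observation \ref{obs:complementarity}.

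The main obstacle is the exact recovery of $N_r$ (equivalently $p_r$) for dense graphs: when several heavy degree classes are present, a hidden vertex of one heavy degree is essentially indistinguishable from one of another heavy degree through the first-order count, and resolving it seems to require precisely the higher power sums one is trying to compute. Turning the expansion above into a genuine determination — in particular controlling how hidden vertices interact with several heavy classes at once — is where the real work lies, and it is essentially equivalent to the star case $H=K_{1,r}$ of Problem \ref{prob:counting}. I would expect progress on that counting problem to feed directly into a proof of the conjecture.
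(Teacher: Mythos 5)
This statement is a conjecture in the paper, and the paper does not prove it; the only thing the paper offers is the closing remark that a positive answer to Problem \ref{prob:counting} (for the stars $K_{1,j}$, $j\le k+1$) would imply it, by reconstructing the star counts $\sum_v\binom{d(v)}{j}$ and solving for the $k$ unknown degrees. Your proposal is exactly this reduction, phrased with power sums and Newton's identities instead of binomial coefficients: Theorem \ref{thm:sizereconstructibility} gives $m$ and hence the $n-k$ shown degrees, and the $k$ hidden degrees would follow from $p_2,\dots,p_k$. That part is correct and matches the paper's intent.

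But the reduction is not a proof, and you candidly identify the unclosed step yourself: the exact reconstruction of $N_r=\sum_v\binom{d(v)}{r}$ from $n-k$ cards. The rounding argument fails precisely in the interesting case. If $G$ has a degree class of linear size at degree $\Theta(n)$, then $N_r=\Theta(n^{r+1})$ and the additive uncertainty $kN_r/(n-r-1)=\Theta(n^r)$ from the $k$ missing cards swamps any rounding; worse, the uncertainty genuinely depends on which $k$ vertices are hidden, which is the very information you are trying to recover. Your fallback via Lemma \ref{lem:stsize} only pins down $\mu_t=d_t-\widehat d_t$ when $d_t$ and $d_{t+1}$ are both small and $(t+1)/n$ avoids rationals of small denominator, so it says nothing about hidden vertices sitting in heavy classes --- and a hidden vertex of one heavy degree versus another is exactly the ambiguity that must be resolved. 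The appeal to ``neighbouring equations and Observation \ref{obs:complementarity}'' is a hope, not an argument. So the proposal reduces the conjecture to the open star case of Problem \ref{prob:counting}, which is the same position the paper leaves the reader in; it does not establish the conjecture.
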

Note that a positive answer to Problem \ref{prob:counting} would give a positive answer to Conjecture \ref{conj:degree}: for fixed $k$ and $n$ sufficiently large, we can find the number of edges of the graph by Theorem \ref{thm:sizereconstructibility} and hence determine all but $k$ elements of the degree sequence. Provided $n$ is sufficiently large, we can reconstruct the number of copies of the star $K_{1,j}$ for $j=1,\dots,k+1$; this is given by $\sum_{v\in V(G)}\binom{d(v)}{j}$. By subtracting the terms corresponding to vertices of known degree, we obtain a sequence of polynomials in the unknown degrees. Adding constants, these form a basis for all polynomials of degree at most $k+1$. From these, it is straightforward to evaluate the remaining degrees.

\paragraph{Acknowledgements.} We would like to thank the referees for their helpful comments.

\end{document}